\newtheorem{thm}{Theorem}[section]
\newtheorem{cor}[thm]{Corollary}
\theoremstyle{definition}
\newcommand{\F}{\mathbb{F}}
\title{A strengthening of McConnel's theorem on permutations over finite fields}
\author{Chi Hoi Yip}
\address{School of Mathematics\\ Georgia Institute of Technology\\ GA 30332\\ United States}
\email{cyip30@gatech.edu}
\keywords{finite field, linearized polynomial, direction}
\subjclass[2020]{11T06, 11B30}
\begin{document}

\begin{abstract}
Let $p$ be a prime, $q=p^n$, and $D \subset \F_q^*$. A celebrated result of McConnel states that if $D$ is a proper subgroup of $\F_q^*$, and $f:\F_q \to \F_q$ is a function such that $(f(x)-f(y))/(x-y) \in D$ whenever $x \neq y$, then $f(x)$ necessarily has the form $ax^{p^j}+b$. In this notes, we give a sufficient condition on $D$ to obtain the same conclusion on $f$. In particular, we show that McConnel's theorem extends if $D$ has small doubling.
\end{abstract}

\maketitle

\section{Introduction}
Throughout this paper, let $p$ be a prime. Let $q=p^n$ and $\F_q$ be the finite field with $q$ elements with $\F_q^*=\F_q \setminus \{0\}$. 

A celebrated result of McConnel \cite{M62} states that if $D$ is a proper subgroup of $\F_q^*$, and $f:\F_q \to \F_q$ is a function such that 
\begin{equation}\label{eq:f}
\frac{f(x)-f(y)}{x-y} \in D
\end{equation}
whenever $x,y \in \F_q$ with $x \neq y$, then there are $a,b \in \F_q$ and an integer $0 \leq j \leq n-1$, such that $f(x)=a x^{p^j}+b$ for all $x \in \F_q$. This result was first proved by Carlitz \cite{C60} when $q$ is odd and $D$ consists of squares in $\F_q^*$, that is, $D$ is the subgroup of index $2$. Carlitz's theorem and McConnel's theorem have various connections with finite geometry, graph theory, and group theory; we refer to a nice survey by Jones \cite[Section 9]{J20}. In particular, they have many applications in finite geometry; see for example \cite{BSW92, KS10}. We also refer to variations of McConnel's theorem in \cite{BL73, G81, L90} via tools from group theory. 


One may wonder if the assumption that $D$ is a multiplicative subgroup plays an important role in McConnel's theorem and if it is possible to weaken this assumption. Inspired by this natural question, in this paper, we find a sufficient condition on $D$ so that if condition~\eqref{eq:f} holds for a function $f:\F_q \to \F_q$, then $f$ necessarily has the form $f(x)=a x^{p^j}+b$. In particular, our main result (Theorem~\ref{thm:main}) strengthens McConnel's theorem. 

Before stating our results, we introduce some motivations and backgrounds from the theory of directions and their applications.  Let $AG(2,q)$ denote the {\em affine Galois plane} over the finite field $\F_q$. Let $U$ be a subset of points in $AG(2,q)$; we use Cartesian coordinates in $AG(2,q)$ so that $U=\{(x_i,y_i):1 \leq i \leq |U|\}$.
The set of {\em directions determined by} $U \subset AG(2, q)$ is 
\[ \mathcal{D}_U=\left\{ \frac{y_j-y_i}{x_j-x_i} \colon 1\leq i <j \leq |U| \right \} \subset \F_q \cup \{\infty\},\]
 where $\infty$ is the vertical direction. If $f:\F_q \to \F_q$ is a function, we can naturally consider its graph $U(f)=\{(x,f(x)): x \in \F_q\}$ and the set of {\em directions determined by $f$} is $\mathcal{D}_f:=\mathcal{D}_{U(f)}$. Indeed, $\mathcal{D}_f$ precisely computes the set of slopes of tangent lines joining two points on $U(f)$. Using this terminology, condition~\eqref{eq:f} is equivalent to $\mathcal{D}_f \subset D$. 
 
The following well-known result is due to Blokhuis, Ball, Brouwer, Storme, and Sz{\H{o}}nyi \cite{B03, BBBSS99}.

\begin{thm}\label{thm:directions}
Let $p$ be a prime and let $q=p^n$. Let $f:\F_q \to \F_q$ be a function such that $f(0)=0$. If $|\mathcal{D}_f|\leq \frac{q+1}{2}$, then $f$ is a linearized polynomial, that is, there are $\alpha_0,\alpha_1, \ldots, \alpha_{n-1}\in \F_q$, such that 
$$
f(x)=\sum_{j=0}^{n-1} \alpha_j x^{p^j}, \quad \forall x \in \F_q.
$$
\end{thm}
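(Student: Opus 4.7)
The plan is to use the Rédei polynomial method, a standard tool for direction problems in $\AG(2,q)$. I associate to $f$ the polynomial
$$R(X,Y) = \prod_{a \in \F_q} (X + aY - f(a)) = X^q + \sum_{j=1}^{q} \sigma_j(Y)\, X^{q-j},$$
where $\sigma_j(Y) \in \F_q[Y]$ has degree at most $j$, since it is (up to sign) the $j$-th elementary symmetric polynomial in the set $\{f(a) - aY : a \in \F_q\}$. The basic dictionary is that, for $m \in \F_q$, the roots of $R(X,m)$ are exactly the $q$ values $f(a) - am$, and these are pairwise distinct precisely when $m \notin \mathcal{D}_f$. In that case $R(X,m)$ is monic of degree $q$ with $q$ distinct roots in $\F_q$, so $R(X,m) = X^q - X$; in particular $\sigma_j(m) = 0$ for every $j \neq q-1$.

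Next, since $|\mathcal{D}_f| \le (q+1)/2$, there are at least $(q-1)/2$ values of $m$ that are non-directions. For each such $m$ one has $\sigma_j(m) = 0$ for every $j \neq q-1$, and combined with $\deg \sigma_j \le j$ this forces $\sigma_j \equiv 0$ for all $j < (q-1)/2$. Consequently $R$ acquires a large gap in its $X$-expansion:
$$R(X,Y) = X^q + h(X,Y), \qquad \deg_X h \le (q+1)/2.$$
Thus for every $y \in \F_q$, the univariate polynomial $R(X,y) \in \F_q[X]$ is monic of degree $q$, fully reducible over $\F_q$, and \emph{lacunary}: its non-leading part has $X$-degree at most $(q+1)/2$. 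This is precisely the hypothesis of Rédei's theorem on fully reducible lacunary polynomials.

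The main obstacle is the concluding structural step: passing from this lacunary factorization to the linearized form of $f$. I would invoke Rédei's theorem (and the refinements it received in the proofs by Blokhuis, Ball, Brouwer, Storme, and Sz\H{o}nyi) to deduce that for each $y \in \F_q$ the root multiset $\{f(a) - ay : a \in \F_q\}$ is stable under translation by an $\F_p$-subspace $V_y$ of $\F_q$. Varying $y$ and piecing together this information should force the graph $U(f) = \{(a, f(a)) : a \in \F_q\}$ itself to be an $\F_p$-linear subspace of $\F_q \times \F_q$. Combined with the normalization $f(0)=0$, this makes $f$ an $\F_p$-linear map, and every such map on $\F_q$ is a linearized polynomial $\sum_{j=0}^{n-1}\alpha_j x^{p^j}$. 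Making the lacunary-to-additive transition precise — especially controlling the case $y \in \mathcal{D}_f$ where $R(X,y)$ has repeated linear factors, and ensuring the subspaces $V_y$ can be stitched together coherently — is the technical heart of the argument.
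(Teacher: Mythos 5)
First, note that the paper does not prove Theorem~\ref{thm:directions} at all: it is imported as a known result of Blokhuis, Ball, Brouwer, Storme, and Sz\H{o}nyi \cite{B03, BBBSS99}, so there is no internal proof to compare against. Your sketch follows the same strategy as those source papers: form the R\'edei polynomial $R(X,Y)=\prod_{a\in\F_q}(X+aY-f(a))$, observe that every non-determined direction $m$ forces $R(X,m)=X^q-X$, and use the at least $(q-1)/2$ non-determined directions together with $\deg\sigma_j\leq j$ to kill all $\sigma_j$ with $j<(q-1)/2$, making $R(X,y)$ a fully reducible lacunary polynomial for every $y$. That part of your argument is correct and is exactly how the cited proofs begin.

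However, the proposal has a genuine gap, and it is located precisely where you acknowledge it: the passage from ``$R(X,y)$ is fully reducible and lacunary for all $y$'' to ``$f$ is $\F_p$-linear.'' R\'edei's theorem on lacunary polynomials does not directly hand you a translation subspace $V_y$ for each $y$, and even granting such subspaces, the claim that they can be ``stitched together'' so that the graph $\{(a,f(a))\}$ becomes an $\F_p$-subspace of $\F_q\times\F_q$ is the entire content of the theorem; it occupies most of \cite{BBBSS99} and of Ball's later paper \cite{B03}. Concretely, what the actual proofs do is introduce the parameter $s=p^e$, the largest power of $p$ such that every affine line meets the graph of $f$ in a multiple of $s$ points, prove via a delicate analysis of the lacunary factorization that $s>1$ when $|\mathcal{D}_f|\leq (q+1)/2$, and then show the graph is $\F_s$-linear; moreover, the sharp constant $(q+1)/2$ (rather than a weaker bound in certain characteristics) requires Ball's refinement in \cite{B03}. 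None of this is supplied by the proposal, so as written it is a correct setup plus a statement of the hard problem, not a proof. Given that the paper itself treats this theorem as a black box, the appropriate resolution is to cite \cite{B03, BBBSS99} rather than to attempt to reprove it.
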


In particular, when $q=p$ is a prime, Theorem~\ref{thm:directions} implies McConnel's theorem; this was first observed by Lov\'{a}sz and Schrijver \cite{LS83}. We remark that when $q=p$ is a prime, M\"uller \cite{M05} showed a stronger result, namely, if $D$ is a non-empty proper subset of $\F_p^*$ such that $f(x)-f(y)\in D$ whenever $x-y \in D$, then there are $a,b \in \F_p$ such that $f(x)=ax+b$ for all $x \in \F_p$. For a general prime power $q$, Theorem~\ref{thm:directions} does not imply McConnel's theorem directly. However, using the idea of directions, Muzychuk \cite{M20} provided a self-contained proof of McConnel's theorem.

While Theorem~\ref{thm:directions} is already quite powerful, in many applications, if some extra information on $\mathcal{D}_f$ is given, it is desirable to obtain a stronger conclusion, namely, $f(x)$ is of the form $ax^{p^j}+b$ for some $j$, or even $f(x)$ must have the form $ax+b$. As an illustration, we mention two recent works in applying a stronger version of Theorem~\ref{thm:directions} to prove analogues of the Erd\H{o}s-Ko-Rado (EKR) theorem in the finite field setting \cite{ACW24, AY22}. Asgarli and Yip \cite{AY22} proved the EKR theorem for a family of pseudo-Paley graphs of square order, and their main result roughly states that if $\mathcal{D}_f$ arises from a Cayley graph with ``nice multiplicative properties" on its connection set, then $f(x)$ has the form $ax+b$; see \cite{Y24} for more discussions. As another example, very recently, Aguglia, Csajb\'{o}k, and Weiner proved several EKR theorems for polynomials over finite fields \cite{ACW24}. Again, one key ingredient in their proof is a strengthening of Theorem~\ref{thm:directions}. In \cite[Theorem 2.2]{ACW24}, they showed that if $\mathcal{D}_f$ is a proper $\F_p$-subspace of $\F_q$, then $f(x)$ has the form $ax+b$; in \cite[Theorem 2.13]{ACW24} (see also \cite{GM14}), they showed that if $\mathcal{D}_f\setminus \{0\}$ is a subset of a coset of $K$, where $K$ is the subgroup of $\F_q^*$ with index $2$, then $f(x)$ has the form $ax^{p^j}+b$ for some $j$.

Inspired by the connection above between the theory of directions and McConnel's theorem, we establish the following result. 

\begin{thm}\label{thm:main}
Let $p$ be a prime. Let $q=p^n$ and $D \subset \F_q^*$. Suppose $f:\F_q \to \F_q$ is a function such that 
$$
\frac{f(x)-f(y)}{x-y} \in D
$$
whenever $x,y \in \F_q$ with $x \neq y$. If 
\begin{equation}\label{eq:bound}
|DD^{-1}D^{-1}|\leq \frac{q+1}{2},    
\end{equation}
then there are $a,b \in \F_q$ and an integer $0 \leq j \leq n-1$, such that $f(x)=a x^{p^j}+b$ for all $x \in \F_q$. In particular, if $c>0$ is a real number such that $|DD|\leq c|D|$, and $c^3|D|\leq \frac{q+1}{2}$, then the same conclusion holds.
\end{thm}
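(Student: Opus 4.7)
The plan is to prove the theorem in two stages. Stage~1 uses Theorem~\ref{thm:directions} to show that $f$ is a linearized polynomial; Stage~2 upgrades this to the monomial form by reducing to McConnel's theorem on a rescaling of $f$.

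\emph{Stage 1.} For any $d_0\in D$, the identity $d^{-1}=d_0\cdot d_0^{-1}\cdot d^{-1}$ embeds $D^{-1}$ into $DD^{-1}D^{-1}$, so
\[
|\mathcal{D}_f|\le|D|=|D^{-1}|\le|DD^{-1}D^{-1}|\le (q+1)/2.
\]
Replacing $f$ by $f-f(0)$ to normalize $f(0)=0$, which preserves $\mathcal{D}_f$, Theorem~\ref{thm:directions} yields $f(x)=\sum_{j=0}^{n-1}\alpha_j x^{p^j}$. Since $0\notin D\supseteq\mathcal{D}_f$, $f$ is a permutation of $\F_q$, and its compositional inverse $g:=f^{-1}$ is again a linearized polynomial with $\mathcal{D}_g=\mathcal{D}_f^{-1}\subseteq D^{-1}$.

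\emph{Stage 2.} Write $S:=\mathcal{D}_f\subseteq D$; by $\F_p$-linearity of $f$, $S=\{f(z)/z:z\in\F_q^*\}$. The monomial conclusion $f(x)=ax^{p^j}+b$ is equivalent to $S$ being a coset $aH_j$ of the proper subgroup $H_j:=\{w^{p^j-1}:w\in\F_q^*\}$ of $\F_q^*$, for then McConnel's theorem applied to $x\mapsto f(x)/a$ concludes. The goal is therefore to show $S$ lies in such a coset. To exploit $|DD^{-1}D^{-1}|\le (q+1)/2$, for each $c\in D$ I consider the auxiliary linearized permutation
\[
h_c(y)\ :=\ g\bigl(c\cdot g(y)\bigr).
\]
The slope chain rule gives
\[
\frac{h_c(y_1)-h_c(y_2)}{y_1-y_2}\ \in\ \mathcal{D}_g\cdot\{c\}\cdot\mathcal{D}_g\ \subseteq\ D^{-1}\{c\}D^{-1}\ \subseteq\ DD^{-1}D^{-1},
\]
so $|\mathcal{D}_{h_c}|\le (q+1)/2$ uniformly in $c\in D$.

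The main obstacle is to convert this uniform family of small direction-set bounds into the coset structure of $S$. The plan is to work inside the twisted polynomial ring $R=\F_q\{\phi\}/(\phi^n-1)$ (with $\phi\cdot\alpha=\alpha^p\phi$), write $g=\sum_i\beta_i\phi^i$, and expand
\[
h_c\ =\ g\cdot c\cdot g\ =\ \sum_{i,k}\beta_i\,c^{p^i}\,\beta_k^{p^i}\,\phi^{i+k};
\]
then, as $c$ ranges over $D$, the uniform direction bound should force all but one $\beta_i$ (equivalently, all but one $\alpha_j$) to vanish, placing $S$ in a single multiplicative coset. I expect this extraction to hinge either on a refined direction-set theorem in the spirit of Ball, pinning down the equality case of Theorem~\ref{thm:directions}, or on a direct Freiman--Ruzsa-type analysis of the tripling condition $|SS^{-1}S^{-1}|\le (q+1)/2$. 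Finally, the ``in particular'' clause follows from the multiplicative Pl\"unnecke--Ruzsa inequality: $|DD|\le c|D|$ yields $|DD^{-1}D^{-1}|\le c^3|D|$, so the hypothesis $c^3|D|\le (q+1)/2$ activates the main statement.
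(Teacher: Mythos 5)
Your Stage~1 is correct and matches the opening of the paper's proof: the embedding $D^{-1}\subseteq DD^{-1}D^{-1}$ gives $|\mathcal{D}_f|\le|D|\le\frac{q+1}{2}$, and Theorem~\ref{thm:directions} makes $f$ linearized. The problem is Stage~2, which is where all the content of the theorem lives, and which you do not actually prove. You correctly reduce the claim to showing that the linearized permutation $f$ is a monomial $ax^{p^j}$, and you correctly observe that each $h_c=g\circ(c\,\cdot)\circ g$ has $\mathcal{D}_{h_c}\subseteq DD^{-1}D^{-1}$, hence $|\mathcal{D}_{h_c}|\le\frac{q+1}{2}$. But the only tool available at that threshold is Theorem~\ref{thm:directions} itself, which returns exactly the information you already have: each $h_c$ is linearized, which is automatic since $g$ is. A uniform bound $|\mathcal{D}_{h_c}|\le\frac{q+1}{2}$ over $c\in D$ does not force the coefficient vector of $g$ in the twisted polynomial ring to be supported on a single $\phi^i$ --- non-monomial linearized polynomials can determine few directions --- and your own text concedes the gap (``should force,'' ``I expect this extraction to hinge either on \dots or on \dots''). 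A plan contingent on an unspecified refined direction theorem or an unperformed Freiman--Ruzsa analysis is not a proof.

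The paper closes this gap with a different auxiliary function: not the compositional inverse $f^{-1}$, but the reciprocal $g(x)=1/f(x^{-1})$ (with $g(0)=0$). A direct computation using the additivity of $f$ shows
\[
\frac{g(x)-g(y)}{x-y}=\frac{f\bigl(\tfrac{x-y}{xy}\bigr)}{\tfrac{x-y}{xy}}\cdot\frac{x^{-1}}{f(x^{-1})}\cdot\frac{y^{-1}}{f(y^{-1})}\in DD^{-1}D^{-1},
\]
so $\mathcal{D}_g\subseteq DD^{-1}D^{-1}$ and Theorem~\ref{thm:directions} makes this $g$ linearized as well. The punchline is then algebraic rather than combinatorial: the identity $f(x^{-1})g(x)=1$ on $\F_q^*$ rewrites as $h(x)=x^{p^{n-1}-1}$ for a polynomial $h$ of degree at most $2(p^{n-1}-1)\le q-1$, so vanishing of $h(x)-x^{p^{n-1}-1}$ on $\F_q^*$ upgrades to a polynomial identity, forcing $g(x)=\gamma x^{p^j}$ and hence $f(x)=\gamma^{-1}x^{p^j}$. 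Your ``in particular'' clause via Pl\"unnecke--Ruzsa is fine and matches the paper, but as submitted the main statement is unproved.
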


If $D$ is a proper subgroup of $\F_q^*$, then clearly $DD^{-1}D^{-1}=D$ and thus Theorem~\ref{thm:main} recovers McConnel's theorem. Indeed, in this case, the doubling constant of $D$ is simply $|DD|/|D|=1$. Our result shows that if the doubling constant $|DD|/|D|$ of $D$ is small, and $|D|$ is not too large, then the analogue of McConnel's theorem still holds. There are many ways to construct a set $D \subset \F_q^*$ with small doubling. For example, we can set $D=K \cup E$, where $K$ is a subgroup of $\F_q^*$, and $E$ is an arbitrary subset of $\F_q^*$ such that $|E|$ is small; alternatively, $D$ can be taken to be the union of some cosets of a fixed subgroup $K$ of $\F_q^*$.

For a linearized polynomial $f(x)$, note that $\mathcal{D}_f=\operatorname{Im}(f(x)/x)$. We refer to \cite{CMP19} and references therein on the study of $\operatorname{Im}(f(x)/x)$ for linearized polynomials $f$. In particular, it is an open question to determine all the possible sizes of $\operatorname{Im}(f(x)/x)$ among linearized polynomials $f$ \cite[Section 6]{CMP19}. Theorem~\ref{thm:main} implies the following corollary, which partially addresses this question. It states that if $D$ is such an image set and $D$ satisfies inequality~\eqref{eq:bound}, then $D$ is necessarily a coset of a subgroup of $\F_{q}^*$ with a restricted index.

\begin{cor}\label{cor}
Let $p$ be a prime and let $q=p^n$. If $D \subset \F_q^*$ and $|DD^{-1}D^{-1}|\leq \frac{q+1}{2}$, then $D=\mathcal{D}_f$ for some function $f: \F_q \to \F_q$ with $f(0)=0$ if and only if $D=aK$, where $a \in \F_q^*$ and $K$ is a subgroup of $\F_{q}^*$ with index $p^r-1$, where $r$ is a divisor of $n$.
\end{cor}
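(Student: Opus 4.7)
The plan is to prove the two directions of the equivalence separately, using the newly-established Theorem~\ref{thm:main} for the nontrivial direction and an explicit construction for the converse.

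For the \emph{if} direction, suppose $D=aK$ with $K$ the unique subgroup of $\F_q^*$ of index $p^r-1$, where $r$ is a divisor of $n$. I will exhibit $f(x)=ax^{p^r}$, which certainly satisfies $f(0)=0$. Since the Frobenius map $x \mapsto x^{p^r}$ is $\F_p$-linear, $(x^{p^r}-y^{p^r})/(x-y) = (x-y)^{p^r-1}$, hence
$$
\mathcal{D}_f = \{a t^{p^r-1} : t \in \F_q^*\} = a \cdot \{t^{p^r-1}: t \in \F_q^*\}.
$$
Because $r \mid n$, we have $(p^r-1) \mid (p^n-1)$, so the set of $(p^r-1)$-th powers is precisely the unique subgroup of $\F_q^*$ of index $p^r-1$, which is $K$. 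Therefore $\mathcal{D}_f = aK = D$.

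For the \emph{only if} direction, suppose $D = \mathcal{D}_f$ for some $f:\F_q \to \F_q$ with $f(0)=0$. By the definition of $\mathcal{D}_f$, condition~\eqref{eq:f} holds for this $D$, and the standing hypothesis $|DD^{-1}D^{-1}| \leq (q+1)/2$ allows me to invoke Theorem~\ref{thm:main} to conclude $f(x) = ax^{p^j}+b$ for some $a,b \in \F_q$ and some $0 \leq j \leq n-1$. The condition $f(0)=0$ forces $b=0$, and since $D \subset \F_q^*$ is nonempty $f$ is nonconstant, so $a \neq 0$. As in the previous paragraph, $\mathcal{D}_f = a\{t^{p^j-1}: t \in \F_q^*\}$. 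The standard identity $\gcd(p^j-1, p^n-1) = p^{\gcd(j,n)}-1$ (adopting the convention $\gcd(0,n)=n$ when $j=0$) then shows that this $d$-th power subgroup has index $p^r - 1$ with $r=\gcd(j,n)$ dividing $n$, giving $D = aK$ in the required form.

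The only nontrivial input is Theorem~\ref{thm:main}; once it is applied, everything reduces to a direct calculation. I therefore do not anticipate any real obstacle beyond careful bookkeeping: the additivity of Frobenius, the equivalence $r \mid n \iff (p^r-1) \mid (p^n-1)$, and the computation of the index of the $d$-th power subgroup of $\F_q^*$ for $d = p^j - 1$.
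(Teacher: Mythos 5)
Your proposal is correct and follows essentially the same route as the paper: the converse via the explicit map $f(x)=ax^{p^r}$, and the forward direction by invoking Theorem~\ref{thm:main} and the identity $\gcd(p^j-1,p^n-1)=p^{\gcd(j,n)}-1$ to identify $\mathcal{D}_f$ as a coset of the power-residue subgroup. Your explicit handling of the $j=0$ convention and the observation that $b=0$, $a\neq 0$ are small bookkeeping details the paper leaves implicit, but the argument is the same.
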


\textbf{Notations.} We follow standard notations for arithmetic operations among sets.  Given two sets $A$ and $B$, we write $AB=\{ab: a \in A, b \in B\}$, $A^{-1}=\{a^{-1}: a \in A\}$.

\section{Proofs}

We start by proving Theorem~\ref{thm:main}. Our proof is inspired by several arguments used in \cite{LS83, M20}.

\begin{proof}[Proof of Theorem~\ref{thm:main}]
Without loss of generality, by replacing the function $f(x)$ with $f(x)-f(0)$, we may assume that $f(0)=0$. Since $|\mathcal{D}_f|\leq \frac{q+1}{2}$, Theorem~\ref{thm:directions} implies that $f$ is linearized. Let $g(x)=1/f(x^{-1})$ for $x \in \F_q \setminus \{0\}$ and set $g(0)=0$. We claim that $g$ is also linearized.

Let $x,y \in \F_q^*$ with $x \neq y$. Then 
$$
\frac{g(x)-g(y)}{x-y}=\frac{\frac{1}{f(x^{-1})}-\frac{1}{f(y^{-1})}}{x-y}=\frac{f(y^{-1})-f(x^{-1})}{(x-y)f(x^{-1})f(y^{-1})}=\frac{f(y^{-1}-x^{-1})}{(x-y)f(x^{-1})f(y^{-1})}
$$
since $f$ is linearized. It follows that
$$
\frac{g(x)-g(y)}{x-y}=\frac{f(\frac{x-y}{xy})}{(x-y)f(x^{-1})f(y^{-1})}=\frac{f(\frac{x-y}{xy})}{\frac{x-y}{xy}} \cdot \frac{x^{-1}}{f(x^{-1})} \cdot \frac{y^{-1}}{f(y^{-1})} \in DD^{-1}D^{-1}.
$$
On the other hand, if $x \in \F_q^*$, then 
$$
\frac{g(x)-g(0)}{x-0}=\frac{g(x)}{x}=\frac{1}{xf(x^{-1})}=\frac{x^{-1}}{f(x^{-1})} \in D^{-1}.
$$
We conclude that 
$$
\mathcal{D}_g \subset DD^{-1}D^{-1} \cup D^{-1}=DD^{-1}D^{-1}.
$$
Then inequality~\eqref{eq:bound} implies that $|\mathcal{D}_g|\leq \frac{q+1}{2}$, and Theorem~\ref{thm:directions} implies that 
$g$ is also linearized. 

Since $f$ and $g$ are both linearized, we can find $\alpha_0, \beta_0, \ldots, \alpha_{n-1}, \beta_{n-1} \in \F_q$, such that
$$
f(x)=\sum_{j=0}^{n-1} \alpha_j x^{p^j}, \quad \text{and} \quad g(x)=\sum_{j=0}^{n-1} \beta_j x^{p^j}, \quad \forall x \in \F_q.
$$
By the definition of $g$, we have $f(x^{-1})g(x)=1$ for each $x \in \F_q^*$, and thus $(x^{p^{n-1}}f(x^{-1}))(g(x)/x)=x^{p^{n-1}-1}$ for all $x \in \F_q^*$. Equivalently, 
\begin{equation}\label{eq:equal}
h(x):=\bigg(\sum_{j=0}^{n-1} \alpha_j x^{p^{n-1}-p^j}\bigg) \bigg(\sum_{j=0}^{n-1} \beta_j x^{p^j-1} \bigg)=x^{p^{n-1}-1}
\end{equation}
holds for all $x \in \F_q^*$. Note that $h(x)-x^{p^{n-1}-1}$ is a polynomial with degree at most $2(p^{n-1}-1)\leq p^n-1=q-1$, and $h(x)-x^{p^{n-1}-1}$ vanishes on $\F_q^*$. It follows that there is a constant $C \in \F_q$, such that $h(x)-x^{p^{n-1}-1}=C(x^{q-1}-1)$ as polynomials. By setting $x=0$, we get $C=0$. Therefore, $h(x)=x^{p^{n-1}-1}$ as polynomials. In particular, $g(x)$ is a factor of $x^{p^{n-1}}$. Thus, there are $\gamma \in \F_q^*$ and $0 \leq j \leq n-1$ such that $g(x)=\gamma x^{p^j}$ for all $x \in \F_q$, and it follows that $f(x)=\gamma^{-1} x^{p^j}$ for all $x \in \F_q$, as required.

Finally, assume that $|DD|\leq c|D|$, and $c^3|D|\leq \frac{q+1}{2}$. Then the Pl\"unnecke–Ruzsa inequality (see for example \cite[Theorem 1.2]{P12}) implies that 
$$
|DD^{-1}D^{-1}|\leq c^3|D| \leq \frac{q+1}{2},
$$
and thus the same conclusion holds.
\end{proof}

Now we use Theorem~\ref{thm:main} to deduce Corollary~\ref{cor}.
\begin{proof}[Proof of Corollary~\ref{cor}]
Assume that $f:\F_q \to \F_q$ is a function such that $f(0)=0$ and $\mathcal{D}_f=D$. Since $D \subset \F_q^*$ and $|DD^{-1}D^{-1}|\leq \frac{q+1}{2}$, Theorem~\ref{thm:main} implies that there exist $a \in \F_q^*$ and an integer $0 \leq j \leq n-1$, such that $f(x)=ax^{p^j}$ for all $x \in \F_q$. Therefore, $D=\mathcal{D}_f=\{ax^{p^j-1}: x \in \F_q^*\}.$ Note that $$\gcd(p^j-1, q-1)=\gcd(p^j-1, p^n-1)=p^{\gcd(j,n)}-1.$$ It follows that $$D=\{ax^{p^{\gcd(j,n)}-1}: x \in \F_q^*\}=aK,$$ where $K$ is the subgroup of $\F_q^*$ with index $p^{\gcd(j,n)}-1$.     

Conversely, let $r$ be a divisor of $n$ and $a \in \F_q^*$. Let $f(x)=ax^{p^r}$ for all $x \in \F_q$. Then we have $\mathcal{D}_f=\{ax^{p^r-1}: x \in \F_q^*\}=aK$, where $K$ is the subgroup of $\F_q^*$ with index $p^{r}-1$.     
\end{proof}

\section*{Acknowledgements}
The author thanks Bence Csajb\'{o}k for helpful discussions.  The author is also grateful to anonymous referees for their valuable comments and suggestions. 

\bibliographystyle{abbrv}
\bibliography{main}

\end{document}